\newtheorem{theorem}{Theorem}[section]
\newtheorem{corollary}[theorem]{Corollary}
\begin{document}

	\title{Characterizing finite groups whose order supergraphs satisfy a connectivity condition}
	\author{Ramesh Prasad Panda\thanks{Department of Mathematics, School of Advanced Sciences, VIT-AP University, Amaravati, PIN-522237, Andhra Pradesh, India.}, Papi Ray\thanks{Department of Mathematics and Statistics, Indian Institute of Technology Kanpur, Kanpur 208016, Uttar Pradesh, India}}
	
	\date{}

	\maketitle
	
	\makeatletter{\renewcommand*{\@makefnmark}{}
		\footnotetext{Email addresses: {\tt rameshprpanda@gmail.com} (R. P. Panda), {\tt popiroy93@gmail.com} (P. Ray)}
	
	\begin{abstract}
		Let $\Gamma$ be an undirected and simple graph. A set $ S $ of vertices in $\Gamma$ is called a {cyclic vertex cutset} of $\Gamma$ if $\Gamma - S$ is disconnected and has at least two components each containing a cycle. If $\Gamma$ has a cyclic vertex cutset, then it is said to be {cyclically separable}. For any finite group $G$, the order supergraph $\mathcal{S}(G)$ is the simple and undirected graph whose vertices are elements of $G$, and two vertices are adjacent if as elements of $G$ the order of one divides the order of the other. In this paper, we characterize the finite nilpotent groups and various non-nilpotent groups, such as the dihedral groups, the dicyclic groups, the EPPO groups, the symmetric groups, and the alternating groups, whose order supergraphs are cyclically separable.
		
		\vskip .5cm
		
		\noindent {\bf Key words.} Cyclically separable graph, vertex connectivity, cyclic vertex connectivity, finite group, order supergraph  
		
		\smallskip
		\noindent {\bf AMS subject classification.} 05C25, 05C40, 20D15

	\end{abstract}

	\section{Introduction}
	
	Starting with Cayley graphs, the association of graphs with groups has a long history. These graphs were introduced by Arthur Cayley \cite{cayley1878desiderata} in 1878. Much later, in 1955, Brauer and Fowler \cite{Brauer1955} introduced the commuting graph of a group while working on the  classification of finite simple groups. Other graphs associated with groups, such as Gruenberg-Kegel graph \cite{Gruenberg1975,Williams1981}, conjugacy class graph \cite{bertram1990}, and generating graph \cite{Liebeck1996}, were defined and studied afterwards. In addition to theoretical interest, these graphs have been studied due to their different applications \cite{bertram,cooperman, Hayat2019, Kelarev2009}. The notion of the power graph of a group was introduced by Kelarev and Quinn \cite{kel-2000,kel-2002}. The \emph{power graph} of a group $ G $, denoted by $\mathcal{P}(G)$, is the undirected and simple graph with vertex set  $ G $ and two vertices are adjacent if one of them is a positive power of the other in $G$. In recent years, power graphs have been studied extensively by researchers (see \cite{AKC,chattopadhyay2019,hamzeh2017,kumar2021,panda2024,zahirovic2022} and the references therein). In particular, the invariants of power graphs, such as chromatic number \cite{xuanlong2015}, vertex connectivity \cite{panda2018a}, adjacency spectrum \cite{Mehranian2016}, minimum degree \cite{PPS-cyclic}, and automorphism group \cite{Feng2016}, are obtained. 
	
	In \cite{hamzeh2017}, Hamzeh and Ashrafi studied the automorphism groups of some supergraphs of $\mathcal{P}(G)$. One of those supergraphs, which the authors named the \emph{order supergraph} of $G$  later in \cite{hamzeh}, is the undirected and simple graph $\mathcal{S}(G)$ with vertex set  $ G $ and two vertices are adjacent if as elements of $G$ the order of one divides the order of the other. In \cite{hamzeh}, they investigated structures, parameters such independence number and number of edges, and panarity of order supergraphs of finite groups. Ma and Su \cite{ma2022ordersuper} continued the study of independence number of order supergraphs and answered a question was posed in \cite{hamzeh}. Kumar et al. \cite{Kumar2023} obtained sharp bounds for the vertex connectivity of order supergraphs of dihedral and dicyclic groups. Then Kumar et al. \cite{kumar2024} studied Hamiltonianity and provided tight bounds for vertex connectivity of order supergraphs of finite groups having an element of exponent order. In \cite{manisha}, Manisha et al. provided line graph characterization of order supergraphs of finite groups.
	
Throughout, we consider only undirected and simple graphs. A \emph{cyclic vertex cutset} of a graph $\Gamma$ is a vertex cutset $S$ of $\Gamma$ such that $\Gamma - S$ has at least two components each containing a cycle. If $\Gamma$ has a cyclic vertex cutset, then it is said to be {\em cyclically vertex separable}. The \emph{cyclic vertex connectivity} $ c\kappa(\Gamma) $ is the minimum of cardinalities of the cyclic vertex cutsets of $\Gamma$.  If $\Gamma$ has no cyclic vertex cutset, $ c\kappa(\Gamma) $ is taken as infinity. The \emph{cyclic edge connectivity} is defined by replacing vertex deletion with edge deletion. The use of the concept of cyclic connectivity dates to the work of Tait in 1880. Tait \cite{tait1880} provided a incorrect proof of the four color conjecture by using a now-disproved result which said that every planar cubic graph satisfying a cyclic connectivity condition had a Hamiltonian cycle. Later, Birkhoff \cite{birkhoff1913} showed that the four color conjecture is true for the planar graphs if it is true for the planar cubic graphs satisfying a condition involving cyclic connectivity. Appel and Haken \cite{appel1977a,appel1977b} provided a computer-assisted proof of the four color conjecture in 1977. In the last few decades, cyclic connectivity of graphs has been explored from different viewpoints, see \cite{aldred1991,liang2016,nedela2022,robertson1984} and the references therein. In fact, cyclic connectivity has been used in problems of graph theory, such as integer flow conjectures \cite{zhang1997}, $n$-extendable graphs \cite{lou1993}, and as a measure of network reliability \cite{latifi1994}. 

Before discussing the objectives of this paper, we first recall various the relevant definitions. We always denote the identity element of a group by $e$, and the order of a group element $x$ by $\circ(x)$. Given a group element $x$, we write $[x]$ for the generators of the cyclic subgroup $\langle x \rangle$. For any positive integer $n$, $\phi(n)$ denotes the Euler's phi function. We write $|A|$ for the number of elements in a set $A$. Then for any group element $x$, we have $|[x]| = \phi(\circ(x))$. For any group $G$, we write $G^* = G \setminus \{e\}$. For any positive integer $n$, we denote by $\mathbb{Z}_n$ the group of integers under addition modulo $n$. For any integer $n \geq 3$, the \emph{dihedral group} of order $2n$ is given by
$$D_{2n} = \langle a, b \mid a^n = b^2 = e, ab = ba^{-1} \rangle.$$
For any integer $n \geq 2$, the \emph{dicyclic group} of order $4n$ is given by
	$$Q_{4n} = \langle a, b \mid a^{2n} = e, a^{n} = b^2, ab = ba^{-1} \rangle.$$
	An \emph{EPPO group} is a group in which every element has prime power order. Whereas, a group is said to be \emph{EPO group} if all non-identity elements have prime orders. 
A group $G$ is said to be {\it nilpotent} if its lower central series $G=G_1 \geq G_2 \geq G_3 \geq G_4 \geq \cdots $ terminates at $\{e\}$ after a finite number of steps, where $G_{i+1}=[G_i, G]$ for $i\geq 1$. If $G$ is a finite group, then $G$ being nilpotent is equivalent to any of the following statements:
\newpage

\begin{enumerate}[\rm(a)]
\item Every Sylow subgroup of $G$ is normal.
\item $G$ is the direct product of its Sylow subgroups.
\item For $x,y\in G$, $x$ and $y$ commute whenever $\circ(x)$ and $\circ(y)$ are relatively prime.
\end{enumerate}
Recall that any abelian group and any finite $p$-group is nilpotent. Whereas, dihedral and dicyclic groups are nilpotent if and only if $n$ is a power of $2$. We finally recall that the \emph{symmetric group} $S_n$ is the group of all permutations of the the set $\{1,2,\dots, n \}$ under function composition, and that the \emph{alternating group} $A_n$ is the subgroup of $S_n$ consisting of all even permutations.

The complete graph on $n$ vertices is denoted by $K_n$. Let $\Gamma_1$ and $\Gamma_2$ be two graphs.  We write $\Gamma_1 \cong \Gamma_2$ if the two graphs are isomorphic. For pairwise disjoint graphs $\Gamma_1, \Gamma_2, \dots \Gamma_r$, we denote their union by $\Gamma_1 + \Gamma_2 + \dots + \Gamma_r$. For any graph $\Gamma$, if $\Gamma_i \cong \Gamma$ for every $i \in \{1,2,\dots, r\}$, then we denote the above disjoint union by $r\Gamma$. The \emph{join} $\Gamma_1 \vee \Gamma_2$ of two disjoint graphs $\Gamma_1$ and $\Gamma_2$ is the graph obtained by taking $\Gamma_1 + \Gamma_2$ and then adding edges $\{v_1,v_2\}$ for all vertices $v_1$ and $v_2$ in $\Gamma_1 $ and $ \Gamma_2$, respectively. Given any group $G$, we write $\mathcal{S}(A)$ for the subgraph of $\mathcal{S}(G)$ induced by $A \subseteq G$.
	
Since we are concerned with vertex deletion, instead of cyclic vertex cutset and cyclically vertex separable, we simply write cyclic cutset and cyclically separable.
	
 In \cite{panda2025pgroups}, Panda characterized the finite $p$-groups whose power graphs are cyclically separable in terms of their maximal cyclic subgroups. Then in \cite{panda2025cyclic}, Panda  determined the orders of finite cyclic, dihedral, and dicyclic groups such that their power graphs are cyclically separable.  	
By definition, the order supergraph of a finite $p$-group is complete, and so it is not cyclically separable. Whereas, it is known from \cite{hamzeh} that power graphs and order supergraphs are equal for finite cyclic groups. Thus the classification of finite cyclic groups with cyclically separable order supergraphs follows from \cite{panda2025cyclic}.  In this paper, we characterize the dihedral groups, the dicyclic groups, the EPPO groups, the finite nilpotent groups, and the symmetric and alternating groups whose order supergraphs are cyclically separable.

\section{Cyclic separability}

In this section, we investigate the existence of cyclic cutsets in order supergraphs of various finite groups. As a result, we characterize the cyclic separability of these graphs. In Theorems \ref{dihedral1} and \ref{dicyclic1}, we study the order supergraphs of dihedral and dicyclic groups, respectively, for cyclically separable condition. In these results, we follow the presentations of the respective groups given in the introduction.

	\begin{theorem}\label{dihedral1}
		For any positive integer $n \geq 3$, $\mathcal{S}(D_{2n})$ is cyclically separable if and only if the following hold:
		\begin{enumerate}[\rm(i)]
			\item $n$ is not a power of $2$,
			\item $n \geq 5$,
			\item $n \neq 6,\, 12$.
		\end{enumerate}
		
		%	  is divisible by an odd number $m \geq 5$ or by an even number.
	\end{theorem}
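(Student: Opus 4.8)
The plan is to analyze the structure of the order supergraph $\mathcal{S}(D_{2n})$ explicitly. The dihedral group $D_{2n}$ has $n$ rotations (the cyclic subgroup $\langle a\rangle$, with elements of various orders dividing $n$) and $n$ reflections (all of order $2$). In $\mathcal{S}(D_{2n})$, adjacency depends only on orders, so I would first stratify the vertex set by element order. The $n$ reflections together with the identity and any order-$2$ rotation all have order $1$ or $2$; since $1 \mid 2$, the identity is adjacent to everything, and all order-$2$ elements form a clique that is also joined to elements whose order is even. The key observation is that an element of order $2$ is adjacent to an element of order $d$ iff $2 \mid d$ or $d \mid 2$. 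I would compute, for each divisor $d$ of $n$, the number of rotations of order $d$ (namely $\varphi(d)$) and record how the reflections (order $2$) attach to each order class.

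\smallskip

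\noindent\textbf{Sufficiency.} Assuming (i)–(iii), I would exhibit an explicit cyclic cutset. The natural candidate is $S = \{x \in D_{2n} : o(x) \in \{1,2\}\}$ or a suitable subset built from low-order elements. After removing all elements of order $1$ and $2$, the remaining rotations split according to their orders among the odd or higher divisors of $n$; I would argue that when $n$ is not a power of $2$ and $n$ is large enough and $n\neq 6,12$, the leftover graph decomposes into at least two components, each containing a cycle. A component contains a cycle precisely when it has at least three mutually adjacent vertices, i.e. at least three elements of pairwise order-divisibility; since $\varphi(d)\geq 2$ for $d\geq 3$ and elements of the same order form a clique, any order class with $\varphi(d)\geq 3$ (or two comparable classes) yields a triangle. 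The arithmetic conditions (i)–(iii) are exactly what guarantee two such cycle-bearing components survive; I would verify that the excluded cases $n\in\{$powers of $2\}$, $n\leq 4$, and $n=6,12$ fail to produce two separate cyclic components (e.g., because the non-trivial divisor structure is too small or too tightly linked through a common order class).

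\smallskip

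\noindent\textbf{Necessity.} For the converse I would show that each failure of (i)–(iii) forces $\mathcal{S}(D_{2n})$ to have no cyclic cutset. If $n$ is a power of $2$, then every rotation has $2$-power order and every element's order divides the maximal one, making the rotation part highly connected through the divisibility chain so that one cannot separate two cycles. If $n\leq 4$, the group is too small to house two disjoint cycles after any cut. The cases $n=6$ and $n=12$ are the delicate exceptional ones: here the divisor lattice of $n$ produces just enough order classes that any attempt to split off two cyclic components fails, because the order-$2$ and order-$3$ (resp. additional order-$4,6$) classes interlock. I expect the main obstacle to be precisely this necessity direction for $n=6$ and $n=12$: I would need a careful case analysis of which order classes can be simultaneously disconnected, showing that in every cut either fewer than two components remain or at least one surviving component is acyclic (a tree or forest). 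This amounts to checking that no bipartition of the surviving order classes gives two sides each containing a triangle, which I would handle by enumerating the (small number of) divisors and their $\varphi$-values.
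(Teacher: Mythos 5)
Your overall strategy---stratify vertices by element order and look for two incomparable order classes each carrying a cycle---is in the same spirit as the paper's proof, but the execution has two genuine gaps. First, in the sufficiency direction your named candidate cutset, the set of all elements of order $1$ or $2$, does not disconnect anything: every rotation has order dividing $n$, so after this deletion the surviving vertices (the rotations of order at least $3$) are all still adjacent to the $\phi(n)$ rotations of order $n$, and the resulting graph is connected (try $n=15$: the classes of orders $3$, $5$, $15$ remain linked through the order-$15$ elements). A working cutset must do the opposite of deleting low-order elements: it must delete the identity, the order-$n$ rotations, and everything else comparable to both intended components, keeping for instance the $n\geq 5$ reflections on one side and the $\phi(m)\geq 4$ elements of some odd order $m\geq 5$ on the other. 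Moreover, when no odd $m\geq 5$ divides $n$---that is, when $n=3\cdot 2^{k}$ with $k\geq 3$, which conditions (i)--(iii) allow---this construction is unavailable and a different pair of incomparable cliques is needed (the elements of orders $3$ and $6$ against those of order $8$); your sketch never isolates this case, and the generic appeal to ``the arithmetic conditions are exactly what guarantee two such cycle-bearing components'' does not substitute for it.

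Second, the necessity direction for $n=6$ and $n=12$, which you correctly identify as the delicate part, is only promised, not proved: you say you \emph{would} enumerate the order classes and check that no cut leaves two components with cycles, but that enumeration is precisely the content of the proof. (For $n=12$: the vertices of orders $1$ and $12$ are adjacent to all others, and what remains is a path of cliques $[a^3]$, $A$, $[a^2]$, $[a^4]$, where $A$ is the set of involutions and $|[a^3]|=|[a^4]|=2$, so every possible disconnection strands a $K_2$.) Two smaller inaccuracies: when $n$ is a power of $2$ the graph is not merely ``highly connected''---it is complete, since all element orders are powers of $2$ and hence pairwise comparable, so no cutset of any kind exists; and your claim that a component contains a cycle ``precisely when'' it has three mutually adjacent vertices is, in general, true only in one direction (a triangle gives a cycle; the converse needs justification), although only the true direction is needed for sufficiency.
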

	
	\begin{proof} Suppose that (i), (ii), and (iii) holds.
		Let $n$ be divisible by an odd number $m \geq 5$, and $x$ be an element of order $m$ in $D_{2n}$. Note that $x \in \langle a \rangle$.	
		
		\smallskip
		\noindent
		Case 1. $m$ is divisible by a prime $p \geq 5$. Then $|[x]| = \phi(m) \geq \phi(5) = 4 $. Hence $[x]$ is a clique of size at least $4$ in $\mathcal{S}(\langle a \rangle)$.
		
		\smallskip
		\noindent
		Case 2. $m$ is divisible by no primes $p \geq 5$. Then $m$ is divisible by $9$, so that $|[x]| = \phi(m) \geq \phi(9) = 6 $. Hence $[x]$ is a clique of size at least $6$ in $\mathcal{S}(\langle a \rangle)$.
		
		Let $S = \langle a \rangle {\setminus} [x]$. Then $\mathcal{S}(D_{2n}) - S $ is disconnected with two components $ \mathcal{S}([x]) $ and $ \mathcal{S}(\{b, ab, \dots, a^{n-1}b\})$.  Since each $ab^i$ has order two, $\{b, ab, \dots, a^{n-1}b\}$ is a clique of size at least $5$ in $\mathcal{S}(D_{2n})$. Whereas, as shown earlier, $[x]$ is a clique of size at least $4$. Hence $\mathcal{S}(D_{2n})$ is cyclically separable.
		
		Now, suppose that $n$ is not divisible by any odd number $m \geq 5$. As $n$ is not a power of $2$ and $n \neq 6, 12$, we have $n = 3 \cdot 2^k$ for some positive integer $k \geq 3$. Then $\langle a \rangle$ has elements, say $y$ and $z$, of order $6$ and $8$, respectively. Then both $[y] \cup [y^2]$ and $[z]$ are cliques of size $4$ in $\mathcal{S}(D_{2n})$. Moreover, no vertex in $[y] \cup [y^2]$ is adjacent to any vertex in $[z]$. Then taking $T = D_{2n} \setminus ([y] \cup [y^2] \cup [z])$, $\mathcal{S}(D_{2n}) - T $ is disconnected with two components $\mathcal{S}([y] \cup [y^2]) $ and $\mathcal{S} ([z])$ each containing cycles. Thus $\mathcal{S}(D_{2n})$ is again cyclically separable.
		
		Now we prove the converse. Suppose that (i) or (ii) do not hold. This implies that $n$ is $3$ or a power of $2$. If $n = 3$, then $\mathcal{S}(D_{2n}) = \mathcal{S}(e) \vee \left[\mathcal{S} ( \langle a \rangle^*) + \mathcal{S}(\{b, ab, a^{2}b\}) \right]$. Thus $\mathcal{S}(D_{2n}) - \{e\}$ is disconnected and that $\mathcal{S} ( \langle a \rangle^*) \cong K_2$ and $\mathcal{S}(\{b, ab, a^{2}b\}) \cong K_3$. Hence $\mathcal{S}(D_{2n})$ is not cyclically separable. Whereas, if $n$ is a power of $2$, then $\mathcal{S}(D_{2n})$ is a complete graph and hence not cyclically separable.
		
		Finally, suppose that (iii) does not hold. That is $n = 6$ or $n = 12$. First let $n = 6$. Then $\langle a \rangle$ is cyclic group of order $6$. So, $\mathcal{S}(D_{2n}) = \mathcal{S}(\{e, a, a^5\}) \vee \left[\mathcal{S} (\{a^2, a^4\}) + \mathcal{S}(\{a^3, b, ab, \dots, a^{5}b\}) \right]$. Thus, to make $\mathcal{S}(D_{2n})$ disconnected, we must delete the set $ \{e, a, a^5\} $ of vertices and that  $\mathcal{S}(D_{2n}) - \{e, a, a^5\}$ is a disconnected graph with components $\mathcal{S} (\{a^2, a^4\}) \cong K_2$ and $ \mathcal{S}(\{a^3, b, ab, \dots, a^{5}b\}) \cong K_7$. Hence $\mathcal{S}(D_{2n})$ is not cyclically separable. 
		
		Next let $n = 12$. Then $\langle a \rangle$ is cyclic group of order $12$, and that the vertices in $\{e\} \cup [a]$ are adjacent to every other vertices in $\mathcal{S}(D_{2n})$. Let $A = \{a^6, b, ab, \dots, a^{11}b\}$. That is, $A$ is the set of elements of order two in $ D_{2n} $. Moreover, $[a^2]$, $[a^3]$, and $[a^4]$ are the elements of order $6$, $4$, and $3$ in $D_{2n}$. We can visualize the structure of $\mathcal{S}(D_{2n}) - (\{e\} \cup [a])$ as given below.
		\begin{figure}[h]
			
			\begin{center}
				\begin{tikzpicture}[
					roundnode/.style={circle, draw=black},
					squarednode/.style={rectangle, draw=red!60, fill=red!5, very thick, minimum size=5mm},
					]
					%Nodes
					\node[roundnode]      (maintopic)                              {$\mathcal{S}([a^2])$};
					\node[roundnode]        (uppercircle)       [left=of maintopic] {$\mathcal{S}(A)$};
					\node[roundnode]      (rightsquare)       [right=of maintopic] {$\mathcal{S}([a^4])$};
					\node[roundnode]        (lowercircle)       [left=of uppercircle] {$\mathcal{S}([a^3])$};
					
					%Lines
					\draw (uppercircle.east) -- (maintopic.west);
					\draw (maintopic.east) -- (rightsquare.west);
					\draw (lowercircle.east) -- (uppercircle.west);
				\end{tikzpicture}
				\caption{$\mathcal{S}(D_{2n}) - (\{e\} \cup [a])$} \label{fig:M1}
			\end{center}
			
		\end{figure}
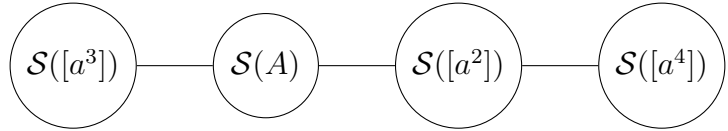

		We observe from the figure that to make $\mathcal{S}(D_{2n}) - (\{e\} \cup [a])$ disconnected, we must delete $A$ or $[a^2]$ or $A \cup [a^2]$. However, since $[a^3]$, and $[a^4]$ are cliques of size two in $\mathcal{S}(D_{2n}) - (\{e\} \cup [a])$, whether we delete $A$ or $[a^2]$ or $A \cup [a^2]$, we will end up getting a disconnected graph with two components and at least one component consists of two vertices. Hence $\mathcal{S}(D_{2n})$ is not cyclically separable.  
	\end{proof}

	\begin{theorem}\label{dicyclic1}
		For any positive integer $n \geq 2$, $\mathcal{S}(Q_{4n})$ is cyclically separable if and only if $n$ is not a power of $2$.
	\end{theorem}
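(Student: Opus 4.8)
The plan is to mirror the strategy of the dihedral case, while exploiting the fact that the structure of $Q_{4n}$ is even more favorable. The crucial observation is that every element of the form $a^i b$ satisfies $(a^i b)^2 = a^n$, so each has order $4$; since there are $2n \geq 4$ of them, the set $B = \{b, ab, \dots, a^{2n-1}b\}$ is a clique of size at least $4$ in $\mathcal{S}(Q_{4n})$ and hence contains a cycle. Moreover, an element of order $d$ is adjacent to a vertex of $B$ if and only if $d \mid 4$ or $4 \mid d$; consequently no element of order $d$ with $d \geq 3$ and $4 \nmid d$ is adjacent to any vertex of $B$. Thus $B$ is a ready-made cycle-containing component, and all that remains is to produce a second one from inside $\langle a \rangle$ using orders of this form. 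In contrast to the dihedral case, I expect no further exceptional values of $n$ to arise, precisely because the elements outside $\langle a \rangle$ have order $4$ rather than $2$.

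For the forward direction, suppose $n$ is not a power of $2$ and write $n = 2^s t$ with $t \geq 3$ odd. First I would treat the case $t \geq 5$: here $\langle a \rangle$ contains $\phi(t) \geq 4$ elements of order $t$, and since $t$ is odd with $t \geq 5$ we have $4 \nmid t$, so the equal-order class $[x]$ of an element $x$ of order $t$ is a clique of size at least $4$ that is non-adjacent to $B$. Taking $S = Q_{4n} \setminus ([x] \cup B)$, the graph $\mathcal{S}(Q_{4n}) - S$ splits into the two cycle-containing components $\mathcal{S}([x])$ and $\mathcal{S}(B)$, so $\mathcal{S}(Q_{4n})$ is cyclically separable. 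The remaining subcase is $t = 3$, that is $n = 3 \cdot 2^s$, where no single equal-order class inside $\langle a \rangle$ is large enough. Here I would instead combine the elements of order $3$ and order $6$: letting $y$ have order $6$, the set $[y] \cup [y^2]$ has $\phi(6) + \phi(3) = 4$ elements, is a clique because $3 \mid 6$, and is non-adjacent to $B$ since $4 \nmid 3$ and $4 \nmid 6$. Deleting $T = Q_{4n} \setminus ([y] \cup [y^2] \cup B)$ again leaves two cycle-containing components.

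For the converse, suppose $n = 2^k$. Then $\langle a \rangle$ is cyclic of order $2^{k+1}$, so every element of $\langle a \rangle$ has order a power of $2$, while the elements $a^i b$ have order $4$; hence every element of $Q_{4n}$ has order a power of $2$. Any two such orders are comparable under divisibility, so $\mathcal{S}(Q_{4n})$ is the complete graph $K_{4n}$. Since deleting vertices from a complete graph can never produce two components, $\mathcal{S}(Q_{4n})$ has no vertex cutset at all and is therefore not cyclically separable.

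The main obstacle I anticipate is not the construction of the cliques but the bookkeeping needed to confirm that after deleting the cutset exactly two components survive, each genuinely disconnected from the other. The delicate point is the case $t = 3$: one must check that orders $3$ and $6$ both actually occur in $\langle a \rangle$ for every $s \geq 0$ (this uses $6 \mid 2n$), and that the auxiliary order-$4$ elements which appear inside $\langle a \rangle$ when $n$ is even are safely absorbed into the cutset rather than bridging the two components. Verifying the non-adjacency conditions purely from the divisibility rule, and confirming $|B| \geq 3$ uniformly from $n \geq 2$, are the remaining routine checks.
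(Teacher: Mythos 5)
Your proof is correct and takes essentially the same approach as the paper: both isolate the clique $B$ of the $2n$ order-$4$ elements outside $\langle a\rangle$ and pair it with a second cycle-containing clique of elements whose orders are incomparable with $4$ under divisibility. The only differences are cosmetic: the paper avoids your case split on the odd part $t$ of $n$ by taking a single element $x$ of order $2p$ (with $p$ an odd prime dividing $n$) and using the clique $\{x\}\cup[x^2]$ of size $p\geq 3$ uniformly, while you spell out the converse (all orders are powers of $2$, so $\mathcal{S}(Q_{4n})$ is complete) that the paper dismisses as clear.
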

	
	\begin{proof}
		If $n$ is a power of $2$, then $\mathcal{S}(Q_{4n})$ is clearly not cyclically separable. 
		
		Now suppose that $n$ is not a power of $2$. Then $n \geq 3$ and is divisible by some prime $p \geq 3$. Let $x$ be an element of order $2p$ in $Q_{4n}$. Then $[x^2]$ is the set of elements of order $p$. Thus $\{x\} \cup [x^2]$ is a clique of size $p$ in $Q_{4n}$. Let $A = \{b, ab, a^2b, \dots, a^{2n-1}b\}$. Then each element in $A$ has order $4$, and thus it is clique of size $2n$. Then for $S = Q_{4n} \setminus (\{x\} \cup [x^2] \cup A)$, the subgraph $\mathcal{S}(Q_{4n}) - S$ is disconnected with components induced by $\{x\} \cup [x^2] $ and $ A$. Since both are cliques of size at least $3$, $\mathcal{S}(Q_{4n})$ is cyclically separable.
	\end{proof}

We now consider EPPO groups for our study of cyclic separability of order supergraphs.
	
	\begin{theorem}\label{thm.eppo}
		Let $G$ be a EPPO group. Then $\mathcal{S}(G)$ is cyclically separable if and only if $pq $ divides $ |G|$ for some primes $p > q \geq 5$ or at least two of the following three conditions hold:
		\begin{enumerate}[\rm(i)]
			\item $p $ divides $ |G|$ for some prime $p \geq 5$,
			\item $G$ has a Sylow $3$-subgroup which is not of order $3$ or not normal,
			\item $G$ has a Sylow $2$-subgroup which is not of order $2$ or not normal.
		\end{enumerate}
	\end{theorem}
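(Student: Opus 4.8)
The plan is to first pin down the shape of $\mathcal{S}(G)$ for an EPPO group and then reduce the whole question to a counting problem about Sylow subgroups. Since every element of $G$ has prime-power order, for two non-identity elements $x,y$ with orders $p^a$ and $q^b$ (exponents positive) one has $o(x)\mid o(y)$ or $o(y)\mid o(x)$ if and only if $p=q$; the identity, having order $1$, is adjacent to every vertex. Hence, writing $C_p$ for the set of non-identity elements of $G$ whose order is a power of $p$, each $C_p$ is a clique, distinct $C_p$'s are mutually non-adjacent, and
\[
\mathcal{S}(G)=\mathcal{S}(\{e\})\vee\Big[\textstyle\sum_{p\mid |G|}\mathcal{S}(C_p)\Big].
\]
So $\mathcal{S}(G)$ is the join of a single (universal) vertex with a disjoint union of cliques, one clique $C_p$ per prime divisor $p$ of $|G|$.

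Next I would reduce cyclic separability to the sizes of the $C_p$. Because $e$ is a dominating vertex, any vertex cutset of $\mathcal{S}(G)$ must contain $e$; once $e$ is removed the graph becomes the disjoint union of the cliques $C_p$, so deleting further vertices only shrinks these cliques without creating new adjacencies. A clique contains a cycle exactly when it has at least three vertices, and such sizes can only decrease under deletion. Consequently $\mathcal{S}(G)$ is cyclically separable if and only if at least two of the cliques $C_p$ have size at least $3$: if two such cliques exist, deleting $e$ (or cutting down to exactly those two) exhibits a cyclic cutset, and if at most one does, then after removing $e$ and anything else at most one surviving component can carry a cycle. This turns the problem into deciding, for each prime $p\mid |G|$, whether $n_p:=|C_p|\ge 3$.

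The third step is to express $n_p\ge 3$ through Sylow data, using that the elements of $p$-power order are exactly $\bigcup_{P\in\mathrm{Syl}_p(G)}P$, that Sylow $p$-subgroups of order $p$ meet pairwise in $\{e\}$, and that the number of Sylow $p$-subgroups is $\equiv 1 \pmod p$. For $p\ge 5$ a single Sylow $p$-subgroup already contributes at least $p-1\ge 4$ non-identity elements, so $n_p\ge 3$ holds automatically; this is condition (i). For $p=3$, if the Sylow $3$-subgroup has order at least $9$ or there are at least two of them (hence at least four) then $n_3\ge 3$, whereas a unique Sylow $3$-subgroup of order $3$ gives $n_3=2$; thus $n_3\ge 3$ is equivalent to (ii). The case $p=2$ is identical in spirit and yields $n_2\ge 3 \Leftrightarrow$ (iii).

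Finally I would match ``at least two primes $p$ with $n_p\ge 3$'' to the stated disjunction. Each prime $p\ge 5$ dividing $|G|$ supplies one such prime, so two primes $p>q\ge 5$ dividing $|G|$ already give separability (the $pq$-clause); otherwise at most one large prime occurs, in which case having two primes with $n_p\ge 3$ means at least two of the indicators (i), (ii), (iii) are present. The delicate point throughout is the bookkeeping of the last two steps: characterizing $n_2\ge 3$ and $n_3\ge 3$ cleanly in terms of the order and normality of the Sylow subgroups, and, above all, recognizing that the two-large-primes situation must be recorded as a separate clause, since there condition (i) may hold while (ii) and (iii) both fail. I expect this logical reconciliation, rather than any single group-theoretic estimate, to be the main thing to get exactly right.
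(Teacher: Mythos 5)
Your proposal is correct, and each step checks out: in an EPPO group two nontrivial elements are adjacent exactly when their orders are powers of the same prime, the identity is a dominating vertex, so $\mathcal{S}(G)=\mathcal{S}(\{e\})\vee\bigl[\sum_{p\mid |G|}\mathcal{S}(C_p)\bigr]$; your intermediate criterion (cyclically separable if and only if at least two of the cliques $C_p$ have at least three vertices) is valid, since every cutset must contain $e$ and further deletions only shrink the cliques; and the Sylow translation of $|C_p|\geq 3$ (automatic for $p\geq 5$; equivalent to (ii) for $p=3$ and to (iii) for $p=2$ via $n_q\equiv 1 \pmod q$ and trivial pairwise intersections of order-$q$ Sylow subgroups) plus the final bookkeeping gives exactly the stated disjunction. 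This is the same underlying mathematics as the paper's proof, but your organization is genuinely different and tighter: the paper proves sufficiency by exhibiting an explicit cutset separately for the $pq$-clause and for each way two of (i)--(iii) can hold, and proves necessity by a case analysis on $|G|=2^\alpha 3^\beta p^\gamma$ (two subcases when $\gamma\neq 0$, one case when $\gamma=0$), in each case invoking the join structure to reach a contradiction. Your single structural criterion replaces all of these cases at once, makes the necessity direction rigorous without pictures, and explains transparently why the two-large-primes situation must appear as a separate clause. What the paper's version buys is concreteness (explicit cutsets in each case); what yours buys is uniformity. One caveat, shared equally by the paper: conditions (ii) and (iii) must be read as presupposing that $3$ (respectively $2$) divides $|G|$, since otherwise the trivial Sylow subgroup satisfies them vacuously and the statement would fail, for instance, for $p$-groups with $p\geq 5$; your identification of (ii) with $|C_3|\geq 3$ and (iii) with $|C_2|\geq 3$ silently adopts this sensible reading, which you should state explicitly.
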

	
	\begin{proof}
		We first assume that $pq $ divides $ |G|$ for some primes $p > q \geq 5$. Then $G$ has elements of $x$ and $y$ of order $p$ and $q$. Clearly, no element of $[x]$ is adjacent to any element of $[y]$. Then for $S := G \setminus ([x] \cup [y])$, the graph $\mathcal{S}(G) \setminus S$ is disconnected and has two components of size at least $\phi(5) = 4$ induced by $[x]$ and $[y]$. Hence $\mathcal{S}(G)$ is cyclically separable.
		
		If $p $ divides $ |G|$ for some prime $p \geq 5$, then $G$ has an element $a$ of order $p$, and so $|[a]|\geq 4$. Next, let $q \in \{2,3\}$ and that $H$ be a Sylow $q$-subgroup of $G$. If $|H| \neq q$, then $H$ is a subgroup of order at least $q^2$. Thus $H^*$ is a clique of size at least $q^2-1 \geq 3$ in $\mathcal{S}(G)$. Whereas, if $H$ is not normal, then by Sylow's theorem, there will be at least $q$ more Sylow $q$-subgroups. Thus if $S_q$ is the set union of all Sylow $q$-subgroups of $G$, then $S^*_q$ is a clique of size at least $(q+1)(q-1) \geq 3$. Hence, if at least two of (i), (ii), and (iii) hold,  then $\mathcal{S}(G)$ is cyclically separable.
		
		Conversely, let $\mathcal{S}(G)$ be cyclically separable. Then the order of $G$ is not a prime power. Suppose that there is no primes $p > q \geq 5$ such that $pq $ divides $ |G|$. Then $|G| = 2^\alpha 3^\beta p^\gamma$ for some prime $p \geq 5$, and integers $\alpha, \beta, \gamma$, at least two of these are positive. 
		
		\smallskip
		\noindent
		\textbf{Case 1.} $|G| = 2^\alpha 3^\beta p^\gamma$, $\gamma \neq 0$. Then (i) holds. From above, $2 $ divides $ |G|$ or $3 $ divides $ |G|$. Thus we get the following subcases.
		
		\smallskip
		\noindent
		\textbf{Subcase 1.} Either $\alpha = 0$ or $\beta = 0$. Let $q \in \{2,3\}$ and if possible, let $G$ have a Sylow $q$-subgroup of $G$ which is of order $q$ and normal. Then $H \cong \mathbb{Z}_q$. Let $\Gamma_1$ and $\Gamma_2$ are the subgraphs of $\mathcal{S}(G)$ induced by the set of elements of order $q$ and by the set of elements whose order is some power of $p$, respectively. Then $\mathcal{S}(G) = \mathcal{S}(\{e\}) \vee ( \Gamma_1 + \Gamma_2 )$. The graph $\mathcal{S}(G)$ can be visualized as follows. 
		
		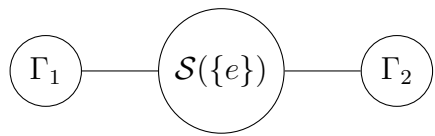
\begin{figure}[h]
			
			\begin{center}
				\begin{tikzpicture}[
					roundnode/.style={circle, draw=black},
					squarednode/.style={rectangle, draw=red!60, fill=red!5, very thick, minimum size=5mm},
					]
					%Nodes
					\node[roundnode]      (maintopic)                              {$\Gamma_2$};
					\node[roundnode]        (uppercircle)       [left=of maintopic] {$\mathcal{S}(\{e\})$};
					\node[roundnode]        (lowercircle)       [left=of uppercircle] {$\Gamma_1$};
					
					%Lines
					\draw (uppercircle.east) -- (maintopic.west);
					\draw (lowercircle.east) -- (uppercircle.west);
				\end{tikzpicture}
				\caption{$\mathcal{S}(G)$} \label{fig:M1}
			\end{center}
			
		\end{figure} 
		
		We have $ \mathcal{S}(\{e\}) \cong K_1 $, and  $\Gamma_1 \cong K_1$ if $q  =2$ and $\Gamma_1 \cong K_2$ if $q  =3$. This contradicts the fact that $\mathcal{S}(G)$ is cyclically separable. Hence either (ii) or (iii) hold.
		
		\smallskip
		\noindent
		\textbf{Subcase 2:} $\alpha \beta \neq 0$. If possible, suppose that both the Sylow $2$-subgroups and the $3$-subgroup of $G$ are normal, and are of order $2$ and $3$, respectively.  Let $\Gamma_1$, $\Gamma_2$, and $\Gamma_3$ are the subgroups of $\mathcal{S}(G)$ induced by the set of elements of order $2$ and $3$, and by the set of elements whose order is some power of $p$, respectively. Then $\mathcal{S}(G) = \mathcal{S}(\{e\}) \vee ( \Gamma_1 + \Gamma_2 + \Gamma_3 )$. The graph $\mathcal{S}(G)$ can be visualized as follows. 
		
		\begin{figure}[h]
			
			\begin{center}
				\begin{tikzpicture}[
					roundnode/.style={circle, draw=black},
					squarednode/.style={rectangle, draw=red!60, fill=red!5, very thick, minimum size=5mm},
					]
					%Nodes
					\node[roundnode]      (uppergraph)           [above=of uppercircle] {$\Gamma_3$};
					\node[roundnode]      (maintopic)                              {$\Gamma_2$};
					\node[roundnode]        (uppercircle)       [left=of maintopic] {$\mathcal{S}(\{e\})$};
					\node[roundnode]        (lowercircle)       [left=of uppercircle] {$\Gamma_1$};
					
					%Lines
					\draw (uppercircle.north) -- (uppergraph.south);
					\draw (uppercircle.east) -- (maintopic.west);
					\draw (lowercircle.east) -- (uppercircle.west);
				\end{tikzpicture}
				\caption{$\mathcal{S}(G)$} \label{fig:M1}
			\end{center}
			
		\end{figure}
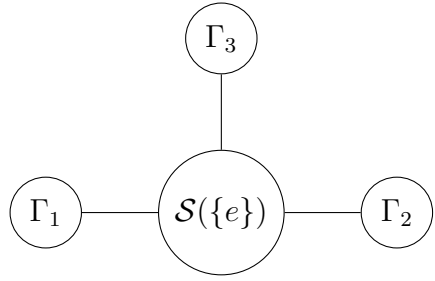 
		
		We have $ \mathcal{S}(\{e\}) \cong K_1 $,  $\Gamma_1 \cong K_1$ and $\Gamma_2 \cong K_2$. This contradicts the fact that $\mathcal{S}(G)$ is cyclically separable. Hence, at least (ii) or (iii) hold.
		
		\smallskip
		\noindent
		\textbf{Case 2:} $|G| = 2^\alpha 3^\beta$, $\alpha \beta \neq 0$. Let $q \in \{2,3\}$ and if possible, let $G$ has a Sylow $q$-subgroup of $G$ which is of order $q$ and normal. Then $H \cong \mathbb{Z}_q$. Let $\Gamma_1$ and $\Gamma_2$ are the subgroups of $\mathcal{S}(G)$ induced by the set of elements whose order is some power of $2$ and $3$, respectively. Then $\mathcal{S}(G) = \mathcal{S}(\{e\}) \vee ( \Gamma_1 + \Gamma_2 )$. We have $ \mathcal{S}(\{e\}) \cong K_1 $, and  $\Gamma_1 \cong K_1$ if $q  =2$ and $\Gamma_2 \cong K_2$ if $q  =3$. This implies that $\mathcal{S}(G)$ is not cyclically separable. As this is a contradiction, both (ii) and (iii) hold.
	\end{proof}
	As a consequence of Theorem \ref{thm.eppo}, we have the following corollary about EPO groups.	
	
	\begin{corollary}
		Let $G$ be an EPO group. Then $\mathcal{S}(G)$ is cyclically separable if and only if $pq $ divides $ |G|$ for some primes $p > q \geq 5$ or at least two of the following three conditions hold:
		\begin{enumerate}[\rm(i)]
			\item $p $ divides $ |G|$ for some prime $p \geq 5$,
			\item $G$ has a Sylow $3$-subgroup which is not cyclic or not normal,
			\item $G$ has a Sylow $2$-subgroup which is not cyclic or not normal.
		\end{enumerate}   
	\end{corollary}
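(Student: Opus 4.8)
The plan is to obtain this corollary as an immediate consequence of Theorem \ref{thm.eppo}, without re-running any of the graph-theoretic arguments. The crucial observation is that every EPO group is in particular an EPPO group, so Theorem \ref{thm.eppo} applies to $G$ verbatim. Since condition (i) is identical in the two statements, and the ``not normal'' clauses of (ii) and (iii) are identical as well, the entire task reduces to verifying that, for an EPO group, the clause ``Sylow $q$-subgroup which is not of order $q$'' of the theorem can be replaced by ``Sylow $q$-subgroup which is not cyclic'' for $q \in \{2,3\}$. Once this single equivalence is in hand, the two characterizations coincide term by term and the corollary follows.

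First I would record the structure of Sylow subgroups in an EPO group: every nonidentity element of $G$ has prime order, so a nontrivial Sylow $q$-subgroup $H$ is a $q$-group in which every nonidentity element has order exactly $q$; that is, $H$ has exponent $q$. The key claim is then that such an $H$ is cyclic if and only if $|H| = q$. The reverse implication holds for any group. For the forward implication, if $H = \langle g \rangle$ is cyclic then $|H|$ equals the order of $g$, which by the EPO hypothesis is a prime; as $H$ is a $q$-group this prime must be $q$, so $|H| = q$. Hence, for the Sylow subgroups under consideration, ``not cyclic'' and ``not of order $q$'' describe exactly the same situation.

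With this equivalence established, conditions (ii) and (iii) of the corollary become, for an EPO group, literally conditions (ii) and (iii) of Theorem \ref{thm.eppo}, and condition (i) is unchanged; applying Theorem \ref{thm.eppo} to the EPPO group $G$ then yields the stated characterization. The only point requiring minor care is that the equivalence in the previous paragraph concerns \emph{nontrivial} Sylow subgroups, so I would note that conditions (ii) and (iii) are read (as in the proof of Theorem \ref{thm.eppo}) under the tacit assumption that the relevant prime divides $|G|$, since otherwise the corresponding clique arguments are vacuous. The main obstacle is thus nothing more than this small structural lemma that a cyclic Sylow subgroup of an EPO group has prime order; everything else is a direct transcription of the already-proved theorem.
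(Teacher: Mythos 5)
Your proposal is correct and takes essentially the same route as the paper, which likewise deduces the corollary directly from Theorem \ref{thm.eppo} after noting that every element of an EPO group has prime order. Your explicit observation that a nontrivial Sylow $q$-subgroup of an EPO group has exponent $q$, and is therefore cyclic if and only if it has order $q$, is exactly the detail the paper leaves implicit (together with the caveat about trivial Sylow subgroups), so nothing is missing.
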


We next classify the finite nilpotent groups with cyclically separable order supergraphs.	
	
	\begin{theorem}
		Let $G$ be a finite nilpotent group. Then $\mathcal{S}(G)$ is cyclically separable if and only if either $|G|$ has at least three prime factors or $|G|$ has exactly two prime factors and at least one of the following conditions holds:
		\begin{enumerate}[\rm(i)]
			\item $pq $ divides $ |G|$ for some primes $p>q\geq 5$,
			\item $G$ has a Sylow $p$-subgroup of exponent at least $p^2$ for some prime $p \geq 5$, and a Sylow $q$-subgroup, where $q \in \{2,3\}$,
			\item $G$ has a Sylow $p$-subgroup of exponent $p$ for some prime $p \geq 5$, and a Sylow $q$-subgroup which is not of order $q$ or not normal, where $q\in  \{2,3\}$,
			\item $G$ has a Sylow $2$-subgroup which is not of order $2$ or not normal, and $G$ has a Sylow $3$-subgroup which is not of order $3$ or not normal,
			\item $G$ has a Sylow $2$-subgroup which is of exponent at least $4$ and not normal, and a Sylow $3$-subgroup,
			\item $G$ has a Sylow $2$-subgroup of exponent at least $8$ and a Sylow $3$-subgroup,
			\item $G$ has a Sylow $3$-subgroup of exponent at least $9$ and a Sylow $2$-subgroup.
		\end{enumerate}   
	\end{theorem}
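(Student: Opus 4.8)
The plan is to exploit the nilpotent structure to turn the question into a combinatorial statement about the poset of element orders. Since $G$ is nilpotent, it is the internal direct product of its Sylow subgroups $P_{p_1}\times\cdots\times P_{p_k}$, and the order of $g=(g_1,\dots,g_k)$ is $\prod_i o(g_i)$ because the factors have coprime orders. Hence the set of orders occurring in $G$ is the divisor grid $\{\prod_i p_i^{a_i}: 0\le a_i\le e_i\}$, where $p_i^{e_i}$ is the exponent of $P_{p_i}$, and $\mathcal{S}(G)$ is the comparability graph of this grid in which each order $d$ is blown up into a clique of size $n_d=\prod_i m_i(a_i)$, where $m_i(a)$ counts the elements of order $p_i^{a}$ in $P_{p_i}$. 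Two elements are adjacent precisely when their orders are comparable under divisibility. First I would record the estimates $m_i(1)\ge p_i-1$ and $m_i(a)\ge \phi(p_i^{a})$ for $a\le e_i$, together with the fact that $n_d\ge 3$ makes the order-$d$ clique contain a cycle.

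With this dictionary the engine of the sufficiency direction is the remark that two cliques of $\mathcal{S}(G)$, each of size at least $3$ and lying on orders that are pairwise incomparable across the two cliques, form two cycle-bearing pieces with no edges between them; deleting all remaining vertices then exhibits a cyclic cutset. For necessity I would argue contrapositively, showing that when the hypotheses fail the graph admits no two disjoint, separable cycle-bearing pieces. Here one must be careful that a component may carry a cycle without carrying a triangle, since the divisor poset contains crowns such as $\{2,3\}<\{12,18\}$ whose comparability graph is an induced $4$-cycle; fortunately, in the configurations that survive the failure of (i)--(vii) the order-grid is small enough to rule out such cycles by direct inspection.

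For sufficiency I would treat the cases in turn. If $|G|$ has at least three prime divisors $p_1<p_2<p_3$, the clique on the order-$p_3$ elements (size $\ge p_3-1\ge 4$) and the clique on the chain $\{p_2,\,p_1p_2\}$ (size $\ge (p_2-1)+(p_1-1)(p_2-1)=p_1(p_2-1)\ge 4$) are incomparable, giving separability. For exactly two primes I would exhibit, in each of (i)--(vii), two incomparable cliques: orders $p$ and $q$ in case (i); orders $p^2$ and $pq$ in case (ii); the order-$p$ clique and $P_q^{*}$ in case (iii); $P_2^{*}$ and $P_3^{*}$ in case (iv); a clique of high $2$-power elements together with $\{$order $3$, order $6\}$ in cases (v) and (vi); and the analogous pair built from order $9$ in case (vii). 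In every instance the required size $\ge 3$ follows from the $\phi$-estimates above, and incomparability is immediate from the shape of the two orders.

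The necessity direction is where the main work lies, and I expect it to be the principal obstacle. Assuming $|G|$ has exactly two prime divisors and that none of (i)--(vii) holds, I would show that the order-grid collapses to one of a short list of configurations — essentially a $2\times 2$ or a thin $2\times k$ grid in which one of the two incomparable directions can never accumulate three elements. The delicate point is that this step depends on the \emph{exact} counts $m_i(a)$ of elements of each prime-power order (for small primes $\phi(2)=1$ and $\phi(3)=2$ leave almost no room), so the argument must convert the failure of each condition into tight numerical bounds and then verify, by inspection of the remaining small graphs, that no two incomparable cycle-bearing pieces — cliques or crown-type $4$-cycles alike — can coexist. Carrying out this bookkeeping carefully for the mixed $\{2,3\}$ situation, where the exponents and orders of both Sylow subgroups interact, is the crux of the proof.
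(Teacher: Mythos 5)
Your sufficiency direction is sound and is essentially the paper's own argument: for each hypothesis you exhibit two cliques of size at least $3$ supported on divisibility-incomparable orders and delete all remaining vertices. At some points your choices are in fact more careful than the paper's; for instance in case (ii) you work with the orders $p^2$ and $pq$, whereas the paper takes the element $a^2b$, which has order $p^2q$ rather than the intended $pq$ (it should be $a^pb$). One remark you miss, though it costs nothing: since $G$ is nilpotent every Sylow subgroup is normal, so every ``not normal'' alternative in (iii)--(v) is vacuous, and in particular condition (v) can never hold for a nilpotent group.

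The genuine gap is the converse, and it is fatal in two senses. First, what you offer there is a plan rather than a proof: ``I would show that the order-grid collapses to one of a short list of configurations'' and ``verify by inspection of the remaining small graphs'' name the required work without doing any of it --- no list of configurations, no numerical bounds, no inspection. Second, the plan cannot be completed, because the necessity direction of the stated theorem is false, and it fails exactly at the point you flag as delicate (the dependence on the exact counts $m_i(a)$). Take $G=\mathbb{Z}_4\times\mathbb{Z}_2\times\mathbb{Z}_3$, or $Q_8\times\mathbb{Z}_3$. In both, the Sylow $3$-subgroup has order $3$ and is normal, and the Sylow $2$-subgroup is normal of exponent $4$, so none of (i)--(vii) holds. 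Yet, writing $X_k$ for the set of elements of order $k$, the set $\{e\}\cup X_{12}\cup X_2$ is a cyclic cutset: its removal leaves exactly two components, the clique on $X_4$ (four, respectively six, vertices) and the clique on $X_3\cup X_6$ (eight, respectively four, vertices), each containing a triangle, since $4$ is divisibility-incomparable with both $3$ and $6$. So $\mathcal{S}(G)$ is cyclically separable even though the right-hand side of the equivalence fails. The paper's own necessity proof commits precisely the error your caution anticipates: in the subcase ``Sylow $3$-subgroup of order $3$, Sylow $2$-subgroup of exponent $4$'' it asserts $|X_2|=1$ and $|X_4|=2$, which is true only when that Sylow $2$-subgroup is cyclic of order $4$, not for a general $2$-group of exponent $4$. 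Carrying out your bookkeeping honestly would therefore not confirm the statement; it would force an additional separating condition in this subcase (namely, that the Sylow $2$-subgroup has at least three elements of order $4$), i.e.\ a corrected theorem rather than the one under review.
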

	
	\begin{proof}
		First, suppose that $|G|$ has at least three prime factors, say $p_1 > p_2 > p_3$. Then $p_1 \geq 5$, $p_2 \geq 3$, $p_3 \geq 2$. Let $a$, $b$, and $c$ be elements of order $p_1 $, $ p_2 $, and $ p_3$, respectively. Then $[a]$ and $[bc] \cup [c]$ are cliques of size $\phi(p_1) =p_1-1 \geq 4$ and $\phi(p_2p_3) + \phi(p_3) =(p_2-1)(p_3-1) + (p_3-1) \geq 2 +1 = 3$, respectively. Hence $G \setminus ([a] \cup [bc] \cup [c])$ is a cyclic cutset of $\mathcal{S}(G)$, and so $\mathcal{S}(G)$ is cyclically separable.
		
		Next, suppose that $|G|$ has exactly two prime factors. Let $p>q\geq 5$ and suppose $G$ has elements $a$ and $b$ of order $p$ and $q$, respectively.  Then $[a]$ and $[b]$ are cliques of size $p-1 \geq 6$ and $q-1 \geq 4$, respectively. Hence $G \setminus ([a] \cup [b])$ is a cyclic cutset of $\mathcal{S}(G)$.
		
		Now let $G$ have a Sylow $p$-subgroup for some prime $p \geq 5$, and a Sylow $q$-subgroup, where $q \in \{2,3\}$. If the Sylow $p$-subgroups have exponent at least $p^2$, then $G$ has an element $a$ of order $p^2$. So $|[a]| = \phi(p^2)\geq 20$. As $G$ also has a Sylow $q$-subgroup, it has an element $b\in G$ of order $q$. Then the order of $a^2b$ is $pq$, and that $|[a^2b]| \geq \phi(pq) \geq 4$. Additionally, none of $\circ(a)|\circ(a^2b)$ and $\circ(a^2b)|\circ(a)$ hold. Hence $G\setminus([a]\cup[a^2b])$ is cyclic cutset of $\mathcal{S}(G)$. Next, let $G$ have a Sylow $p$-subgroup of exponent $p$, and a Sylow $q$-subgroup which is not of order $q$ or not normal. Then from the proof of Theorem \ref{thm.eppo}, $G$ has a cyclic cutset.
		
		If $G$ has a Sylow $2$-subgroup which is not of order $2$ or not normal, and $G$ has a Sylow $3$-subgroup is not of order $3$ or not normal, then again from the proof of Theorem \ref{thm.eppo} we know that $G$ has a cyclic cutset.
		
		Suppose $G$ has a Sylow $2$-subgroup which is of exponent at least $4$ and not normal, and a Sylow $3$-subgroup. Let $a$ and $b$ be elements of order $4$ belonging to different Sylow $2$-subgroups, and that $c$ be an element of order $3$. Then $[a] \cup [b]$ is a clique in $\mathcal{S}(G)$ and $|[a] \cup [b]| = \phi(4) + \phi(4) =4$. Whereas, $a^2c$ is of order $6$, so that $[a^2c] \cup [c]$ is a clique of size $\phi(6) + \phi(3) =4$. Additionally, the order of no element of $[a] \cup [b]$ divides that of any element of $[a^2c] \cup [c]$, and vice versa. Hence, $G \setminus ([a] \cup [b] \cup [a^2c] \cup [c])$ is a cyclic cutset of $\mathcal{S}(G)$.
		
		Next, let $G$ have a Sylow $2$-subgroup of exponent at least $8$ and a Sylow $3$-subgroup. Then there exists an element $a$ of order $8$ and an element $b$ of order $3$ in $G$. Then $a^2b$ is of order $12$. Thus none of $\circ(a) | \circ(a^2b)$ and $\circ{(a^2b)} | \circ{(b)}$ hold. Moreover, $|[a]| = \phi(8)=4$ and $|[a^2b]| = \phi(12) = 4$. Thus,  $G\setminus ([a]\cup [a^2b])$ is a cyclic cutset of $\mathcal{S}(G)$. 
		
		Finally, suppose that $G$ has a Sylow $3$-subgroup of exponent at least $9$ and a Sylow $2$-subgroup. Then there exists an element $a$ of order $9$ and an element $b$ of order $2$ in $G$. So $a^2b$ is of order $6$. Then $[a]$ and $[a^2b] \cup [b]$ are cliques of size $\phi(9) =6$ and $\phi(6) + \phi(2) = 2 +1 = 3$, respectively. Additionally, the order of no element of $[a]$ divides the order of any element of $[a^2b] \cup [b]$ and vice versa. Hence $G \setminus ([a] \cup [a^2b] \cup [b])$ is a cyclic cutset of $\mathcal{S}(G)$.
		
		Therefore, if at least one of (i)-(vi) holds, then $\mathcal{S}(G)$ is cyclically separable.
		
		To prove the converse, let $|G|$ have at most two prime factors and that none of (i)-(vi) holds. If $|G|$ has exactly one prime factor, then $\mathcal{S}(G)$ is complete, and so it is not cyclically separable. Thus $|G|$ have exactly two distinct prime factors. As (i) does not hold, $|G|$ has at most one prime divisor $p \geq 5$.
		
		\noindent
		\textbf{Case 1.} $p $ divides $ |G|$ for some prime $p \geq 5$. Then $G$ has a Sylow $p$-subgroup of exponent $p$, and a Sylow $q$-subgroup which is of order $q$ or normal, where $q\in  \{2,3\}$. If $\Gamma_1$, $\Gamma_2$, and $\Gamma_3$ are subgraphs of $\mathcal{S}(G)$ induced by elements of order $p$, $q$, and $pq$, respectively, then $\mathcal{S}(G) = \mathcal{S}(\{e\}) \vee \left[ \Gamma_3 \vee (\Gamma_1 + \Gamma_2)\right]$. In fact, $\mathcal{S}(G) - \{e\}$ can be visualized as follows.

		\begin{figure}[h]
			
			\begin{center}
				\begin{tikzpicture}[
					roundnode/.style={circle, draw=black},
					squarednode/.style={rectangle, draw=red!60, fill=red!5, very thick, minimum size=5mm},
					]
					%Nodes
					\node[roundnode]      (maintopic)                              {$\Gamma_2$};
					\node[roundnode]        (uppercircle)       [left=of maintopic] {$\Gamma_3$};
					\node[roundnode]        (lowercircle)       [left=of uppercircle] {$\Gamma_1$};
					
					%Lines
					\draw (uppercircle.east) -- (maintopic.west);
					\draw (lowercircle.east) -- (uppercircle.west);
				\end{tikzpicture}
				\caption{$\mathcal{S}(G) - \{e\}$} \label{fig:M1}
			\end{center}
			
		\end{figure}
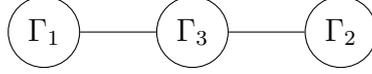 
	
	Let $S$ be the set of elements of order $pq$ in $G$. We observe in the figure that to disconnect $\mathcal{S}(G)$, we must delete $S \cup \{e\}$. However, as $q \in \{2,3\}$, we have $\Gamma_2 \cong K_1$ or $\Gamma_2 \cong K_2$. Thus $\mathcal{S}(G) - (S \cup \{e\})$ have exactly two components and one of them does not contain a cycle.  Hence $\mathcal{S}(G)$ is not cyclically separable.
	
	\smallskip
	\noindent
	\textbf{Case 2.} $p $ divides $ |G|$ for no primes $p \geq 5$. So the prime factors of $|G|$ are $2$ and $3$. As (iv) does not hold, the Sylow $2$-subgroup is of order $2$ and normal or the Sylow $3$-subgroup is of order $3$ and normal. Moreover, as (v) and (vi) do not hold, the Sylow $2$-subgroups are of exponent $2$ or they have exponent $4$ and are normal, and the Sylow $3$-subgroups are of exponent $3$.
	
	\smallskip
\noindent
\textbf{Subcase 1.}	The Sylow $2$-subgroup is of order $2$ and normal. If $\Gamma_1$, $\Gamma_2$, and $\Gamma_3$ are subgraphs of $\mathcal{S}(G)$ induced by elements of order $2$, $3$, and $6$, respectively, then $\mathcal{S}(G) = \mathcal{S}(\{e\}) \vee \left[ \Gamma_3 \vee (\Gamma_1 + \Gamma_2)\right]$. Let $S$ be the set of elements of order $6$ in $G$. Then by argument similar to that of Case 1, to disconnect $\mathcal{S}(G)$, we must delete $S \cup \{e\}$. However, $\Gamma_1 \cong K_1$. Thus $\mathcal{S}(G) - (S \cup \{e\})$ have exactly two components $\Gamma_1$ and $\Gamma_2$, and $\Gamma_1$ does not contain any cycle. Hence $\mathcal{S}(G)$ is not cyclically separable.

	\smallskip
\noindent
\textbf{Subcase 2.}	The Sylow $3$-subgroup is of order $3$ and normal. If the Sylow $2$-subgroups are of exponent $2$, and $\Gamma_1$, $\Gamma_2$, and $\Gamma_3$ are subgraphs of $\mathcal{S}(G)$ induced by elements of order $2$, $3$, and $6$, respectively, then $\mathcal{S}(G) = \mathcal{S}(\{e\}) \vee \left[ \Gamma_3 \vee (\Gamma_1 + \Gamma_2)\right]$. As $\Gamma_2 \cong K_2$, by arguments similar to that of Subcase 1, $\mathcal{S}(G)$ is not cyclically separable. Now let the Sylow $2$-subgroup is of exponent $4$ and normal.
Now, let the Sylow $2$-subgroup have exponent $4$. The exponent of $G$ is 12. Let $X_k$ denote the set of elements of order $k$ in $G$. Then $\mathcal{S}(G) = \mathcal{S} \left(\{e\} \cup X_{12} \right) \vee \left[ \mathcal{S}(X_2) \cup \mathcal{S}(X_3) \cup \mathcal{S}(X_4) \cup \mathcal{S}(X_6) \right]$. Thus to disconnect $\mathcal{S}(G)$, we must delete $\{e\} \cup X_{12}$. 
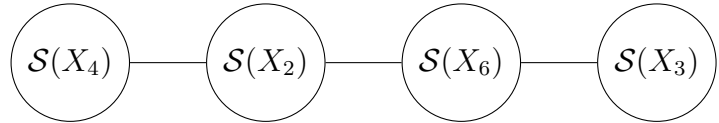
\begin{figure}[h]
	
	\begin{center}
			\begin{tikzpicture}[
					roundnode/.style={circle, draw=black},
					squarednode/.style={rectangle, draw=red!60, fill=red!5, very thick, minimum size=5mm},
					]
					%Nodes
					\node[roundnode]      (maintopic)                              {$\mathcal{S}(X_6)$};
					\node[roundnode]        (uppercircle)       [left=of maintopic] {$\mathcal{S}(X_2)$};
					\node[roundnode]      (rightsquare)       [right=of maintopic] {$\mathcal{S}(X_3)$};
					\node[roundnode]        (lowercircle)       [left=of uppercircle] {$\mathcal{S}(X_4)$};
					
					%Lines
					\draw (uppercircle.east) -- (maintopic.west);
					\draw (maintopic.east) -- (rightsquare.west);
					\draw (lowercircle.east) -- (uppercircle.west);
				\end{tikzpicture}
			\caption{$\mathcal{S}(G) - (\{e\} \cup X_{12})$} \label{fig:M1}
		\end{center}
\end{figure} 

We observe that $ \mathcal{S}(X_2) $, $ \mathcal{S}(X_3) $, $ \mathcal{S}(X_4) $, and $ \mathcal{S}(X_6)  $ are cliques of size 1, 2, 2, and 2, respectively. Next, to make $\mathcal{S}(G)$ disconnected, we must delete $X_2$ or $ X_6 $ or $X_2 \cup X_6$ from $\mathcal{S}(G) - (\{e\} \cup X_{12})$.
If we delete $X_2$, then $ \mathcal{S}(X_4) $ and $ \mathcal{S}(X_3) \cup \mathcal{S}(X_6) $ are the two components of $\mathcal{S}(G) - (\{e\} \cup X_{12})$. Whereas, if we delete $X_6$, then $ \mathcal{S}(X_2) \cup  \mathcal{S}(X_4) $ and $ \mathcal{S}(X_3) $ are the two components of $\mathcal{S}(G) - (\{e\} \cup X_{12})$. However, since $|X_4| = |X_6| = 2$ and neither $ \mathcal{S}(X_2) $ nor $ \mathcal{S}(X_6) $ is a cyclic cutset of $\mathcal{S}(G) - (\{e\} \cup X_{12})$. Hence $\mathcal{S}(G)$ is not cyclically separable.
	\end{proof}

We finally investigate the order supergraphs of the symmetric and alternating groups. 	
	
	\begin{theorem}\label{thm.sym}
		For any positive integer $n$,  $\mathcal{S}(S_n)$ is cyclically separable if and only if $n\geq 4$.
	\end{theorem}
	\begin{proof}
		An element $\mu$ of $S_n$ is said to be of type $(1^{m_1}, 2^{m_2}, \cdots, l^{m_l})$, if for $1 \leq i \leq m$, $\mu$ has $m_i$ many $i$-cycles . We know the number of elements in the conjugacy  class represented by $\mu$ is
		$${\frac{n!}{\prod_{r}r^{m_r}{m_r}!}},$$
		where $r$ denotes the length of a cycle and $m_r$ denotes the occurrence of the cycles of length $r$. Now, $S_2=\{e,(1,2)\}$ and $S_3=\{e,(1,2), (1,3),(2,3),(1,2,3),(1,3,2)\}$, which are not cyclically separable. For $n\geq 4$, the number of element of cycles of type $(n^1)$ in $S_n$ is $\frac{n!}{n. 1!}=n-1\geq 3$, and the number of cycles of type $((n-1)^1)$ is $\frac{n!}{(n-1).1!}=n.(n-2)!\geq 8$. It is clear that $n-1$ does not divide $n$ unless $n=2$. Now let, $[\alpha]$ denote the set of all elements in $S_n$ of cycle type $(n^1)$ and $[\beta]$ denote the set of all elements in $S_n$ of cycle type $(1^1,(n-1)^1)$. Then for $T:= S_n \setminus([\alpha]\cup[\beta])$, the graph $\mathcal{S}(S_n)\setminus T$ is disconnected and has two components each of size $\geq 3$ induced by $[\alpha]$ and $[\beta]$. Hence, $\mathcal{S}(S_n)$ is cyclically separable if and only if $n\geq 4$.
	\end{proof}
	
	\begin{theorem}
		For any positive integer $n$,  $\mathcal{S}(A_n)$ is cyclically separable if and only if $n\geq 4$.
	\end{theorem}
	\begin{proof}
		Since $A_3=\{e,(1,2,3),(1,3,2)\}$, so it is not cyclically separable. Now, for $n\geq 4$ the number of cycles of the type $(a,b)(c,d)$ is $\frac{n!}{(2^2. 2!)(1^{n-4}(n-4)!)}=\frac{n(n-1)(n-2)(n-3)}{2^2.2}\geq 3$.
		
		\noindent {\bf Case 1.}  $n$ is even. Then $n-1$ is odd. If $\mu$ is an element of cycle type $(1^1,(n-1)^1)$ that is $\mu$ is a $(n-1)$ cycle, then $\mu \in A_n$. Now, from Theorem \ref{thm.sym}, we know for $n\geq 4$ the number of cycles of type $(1^1,(n-1)^1)$ in $S_n$ is $\geq 8$. That is, for $n\geq 4$ the number of cycles of type $(1^1,(n-1)^1)$ in $A_n$ is $\geq 8$. Let, $[\alpha]$ denote the set of all elements in $A_n$ of cycle type $(1^0,2^2,3^0, \cdots ,n^0)$ and $[\beta]$ denote the set of all elements in $A_n$ of cycle type $(1^1,(n-1)^1)$. Then for $T:= A_n \setminus([\alpha]\cup[\beta])$, the graph $\mathcal{S}(A_n)\setminus T$ is disconnected and has two components each of size $\geq 3$ induced by $[\alpha]$ and $[\beta]$. Hence, $\mathcal{S}(A_n)$ is cyclically separable if and only if $n\geq 4$.
		
		\noindent {\bf Case 2.}  $n$ is odd. In this case the proof will be similar to the proof of Case 1. But here $[\beta]$ denotes the set of all elements in $A_n$ of cycle type $(n^1)$.
	\end{proof}


\begin{thebibliography}{99}
				
		\bibitem{AKC}
		J. Abawajy, A. Kelarev, and M. Chowdhury.
		\newblock Power graphs: a survey.
		\newblock {\em Electron. J. Graph Theory Appl.} 1(2):125--147, 2013
		
		\bibitem{aldred1991}
		Aldred, R. E. L., Holton, D. A., and Jackson, B. Uniform cyclic edge connectivity in cubic graphs. \emph{Combinatorica} 11:81--96, 1991.
		
		\bibitem{appel1977a}
K. Appel and W. Haken. Every planar map is four colorable. Part I. Discharging. \emph{Illinois J. Math.} 21:429--490, 1977.

\bibitem{appel1977b}
K. Appel, W. Haken, and J. Koch. Every planar map is four colorable. Part II. Reducibility. \emph{Illinois J. Math.} 21:491--567, 1977.
		
		
		\bibitem{bertram}
		E. A. Bertram.
		\newblock Some applications of graph theory to finite groups.
		\newblock \emph{Discrete Math.} {44}(1):31--43, 1983.
		
		\bibitem{bertram1990}
		E. A. Bertram, M. Herzog, and A. Mann. \newblock On a graph related to conjugacy classes of groups. \newblock \emph{Bull. London Math. Soc.} 22(6):569--575, 1990.
		
		\bibitem{birkhoff1913}
		G. D.~Birkhoff.
		\newblock The reducibility of maps.
		\newblock {\em Amer. J. Math.} 35:115--128, 1913.	
		
		\bibitem{Brauer1955}
		R. Brauer, K. A. Fowler. \newblock On groups of even order. \newblock \emph{Ann. Math.} 62:565--583, 1955.	
		
		\bibitem{cayley1878desiderata}
		A. Cayley. Desiderata and {S}uggestions: {N}o. 2. {T}he {T}heory of {G}roups:
		{G}raphical {R}epresentation.
		\newblock \emph{Amer. J. Math.} {1}(2):174--176, 1878.
		
		
		\bibitem{chattopadhyay2019}
		S. Chattopadhyay, K. L. Patra,	and B. K. Sahoo.
		\newblock Vertex connectivity of the power graph of a finite cyclic group.
		\newblock \emph{Discrete Appl. Math.} 266:259--271, 2019.
		
		
		\bibitem{cooperman}
		G. Cooperman, L. Finkelstein, and N. Sarawagi, Applications of Cayley graphs, in: Applied Algebra, Algebraic Algorithms and Error-Correcting Codes (Tokyo, 1990), in: Lecture Notes in Comput. Sci, vol. 508, Springer, Berlin, 1991, pp. 367–378.
		
		\bibitem{Feng2016}
		M. Feng, X. Ma, and K. Wang. \newblock The full automorphism group of the power (di)graph of a finite group. \newblock \emph{European J. Combin.} 52:197--206, 2016.
		
		\bibitem{Gruenberg1975}
		K. W. Gruenberg and K. W. Roggenkamp. \newblock Decomposition of the augmentation ideal and of the relation modules of a finite group. \newblock \emph{Proc. London Math. Soc.} (3), 31:149--166, 1975.
		
		\bibitem{hamzeh2017}
		A. Hamzeh and A. R. Ashrafi. \newblock Automorphism group of supergraphs of the power graph of a finite group. \newblock \emph{European J Combin.} 60:82--88, 2017.
		
		\bibitem{hamzeh}
		A. Hamzeh and A. R. Ashrafi. \newblock The order supergraph of the power graph of a finite group.
		\newblock \emph{Turkish J. Math. }  42(4):1978--1989, 2018.
		
		\bibitem{Hayat2019}
		U. Hayat, M. Umer, I. Gutman, B. Davvaz, and A. Nolla de Celis. \newblock A novel
		method to construct NSSD molecular graphs. \emph{Open Math.} 17(1):1526--1537, 2019.
		
		\bibitem{kel-2000}
		A.~V. Kelarev and S.~J. Quinn.
		\newblock A combinatorial property and power graphs of groups.
		\newblock In {\em Contributions to General Algebra} 12 (Vienna, 1999), pp. 229--235. Heyn, Klagenfurt, 2000.
		
		\bibitem{kel-2002}
		A.~V. Kelarev and S.~J. Quinn.
		\newblock Directed graphs and combinatorial properties of semigroups.
		\newblock {\em J. Algebra} 251(1):16--26, 2002.
		
		\bibitem{Kelarev2009}
		A. Kelarev, J. Ryan, and J. Yearwood. \newblock Cayley graphs as classifiers for data mining: the influence of asymmetries. \newblock \emph{Discrete Math.} 309(17):5360--5369, 2009.
		


	\bibitem{kumar2021}
	A. Kumar, L. Selvaganesh, P. J. Cameron, and T. Tamizh Chelvam.
	\newblock Recent developments on the power graph of finite groups - a survey.
	\newblock {\em AKCE Int. J. Graphs Comb.} 18(2):65--94, 2021.
	
	\bibitem{kumar2024}
	A. Kumar, L. Selvaganesh, P. J. Cameron, and T. Tamizh Chelvam.
	\newblock Superpower graphs of finite groups.
	\newblock \emph{J. Algebra Appl.}, Article Id: 2550214, 18 pp, 2025. DOI:10.1142/S0219498825502147
		
		\bibitem{Kumar2023}
		A. Kumar, L. Selvaganesh, and T. Tamizh Chelvam.
		\newblock Connectivity of superpower graphs of some non-abelian finite groups.
		\newblock \emph{Discrete Math. Algorithms Appl.} 15(04):2250108, 2023.
			
		\bibitem{latifi1994}
		S. Latifi, M. Hegde, and M. Naraghi-Pour.
		\newblock Conditional connectivity measures for large multiprocessor systems.
		\newblock {\em IEEE Trans. Comput.} 43(2):218--222, 1994.
		
		\bibitem{liang2016}
		J. Liang, D. Lou, and Z. Zhang. A polynomial time algorithm for cyclic vertex connectivity of cubic graphs. \emph{Int. J. Comput. Math} 94(7):1501--1514, 2016.
		
		\bibitem{Liebeck1996}
		M. W. Liebeck and A. Shalev. \newblock Simple groups, probabilistic methods, and a conjecture of Kantor and Lubotzky. \newblock \emph{J. Algebra} 184:31--57, 1996.
		
		\bibitem{liu2022}
		X. Liu, S. Zhou, and H. Zhang.
		\newblock Cyclic Vertex (Edge) Connectivity of Burnt Pancake Graphs.
		\newblock \emph{Parallel Process. Lett.} 32(3--4):2250006, 15 pp., 2022.
		
		\bibitem{lou1993}
		D. Lou and D. A. Holton. Lower bound of cyclic edge connectivity for $n$-extendability of regular graphs. \emph{Discrete Math} 112:139--150, 1993.
		
		\bibitem{xuanlong2015}
		X. Ma and M. Feng. 
		\newblock On the chromatic number of the power graph of a finite group. 
		\newblock \emph{Indag. Math. (N.S.)} 26(4):626--633, 2015. 
		
		\bibitem{ma2022ordersuper}
		X. Ma and H. Su. On the order supergraph of the power graph of a finite group. \emph{Ricerche Mat} 71, 381--390 (2022)
		
		\bibitem{manisha}
		M. Manisha, P. Parveen, and Jitender Kumar. Line graph characterization of the order supergraph of a nite group. \emph{Commun. Comb. Optim.}, DOI:10.22049/cco.2024.29375.1962 
		
		\bibitem{Mehranian2016}
		Z. Mehranian, A. Gholami, and A. R. Ashrafi. \newblock The Spectra of Power Graphs of Certain Finite Groups. \newblock \emph{Linear Multilinear Algebra} 65(5):1003--1010, 2016.
		
		
		\bibitem{nedela2022}
		R. Nedela and M. \v{S}koviera.
		\newblock Cyclic connectivity, edge-elimination, and the twisted Isaacs graphs.
		\newblock {\em J. Combin. Theory Ser. B} 155:17--44, 2022.    
		
		
		\bibitem{panda2025cyclic}
		R.~P. Panda.
		\newblock On finite groups whose power graphs satisfy certain connectivity conditions.
		\newblock 	arXiv:2504.00571
		
		\bibitem{panda2025pgroups}
		R.~P. Panda.
		\newblock Characterizations of $p$-groups whose power graphs satisfy certain connectivity conditions.
		\newblock \emph{Discrete Math. Algorithms Appl.}, Article Id: 2550025, 10 pp, 2025. DOI: 10.1142/S1793830925500259
		
		
		\bibitem{panda2018a}
		R. P. Panda and K. V. Krishna. 
		\newblock On connectedness of power graphs of finite groups.
		\newblock \emph{J. Algebra Appl.} 17(10):1850184, 20pp, 2018.
		
		\bibitem{PPS-cyclic}
		R.~P. Panda, K.~L. Patra, and B.~K. Sahoo.
		\newblock On the minimum degree of the power graph of a finite cyclic group.  
		\newblock \emph{J. Algebra Appl.} 20(3):2150044, 18 pp, 2021.
		
	
		
		\bibitem{panda2024}
		R.~P. Panda, K.~L. Patra, and B.~K. Sahoo.
		\newblock Characterizing finite nilpotent groups associated with a graph theoretic equality.
		\newblock \emph{Ricerche Mat.} 73(2):1017--1027, 2024.
		
		
		\bibitem{robertson1984}
		N. Robertson.
		\newblock Minimal cyclic-connected graphs.
		\newblock \emph{Trans. Amer. Math. Soc.} 284:665--687, 1984.
	
		
		\bibitem{tait1880}
		P.~G. Tait.
		\newblock Remarks on the coloring of maps.
		\newblock {\em Proc. Roy. Soc. Edinburg} 10:501--503, 1880.
		
		\bibitem{Williams1981}
		J. S. Williams. \newblock Prime graph components of finite groups. \newblock \emph{J. Algebra}, 69:487--513 1981.
		
		\bibitem{zahirovic2022}
		S. Zahirovi\'c.
		\newblock The power graph of a torsion-free group of nilpotency class 2.
		\newblock \emph{J. Algebraic Combin.} 55:715--727, 2022.
		
		
		\bibitem{zhang1997}
		C.~Q. Zhang.
		\newblock \emph{Integer flows and cycle covers of graphs}. 
		\newblock Marcel Dekker Inc., New York, 1997.
		
	\end{thebibliography}
\end{document}